\newtheorem{theorem}{Theorem}[section]
\newtheorem{lemma}[theorem]{Lemma}
\newtheorem{corollary}[theorem]{Corollary}
\theoremstyle{definition}
\newtheorem{definition}[theorem]{Definition}
\theoremstyle{remark}
\newtheorem{remark}[theorem]{Remark}
\numberwithin{equation}{section}
     \DeclareMathOperator{\Aut}{Aut}
     \DeclareMathOperator{\Inn}{Inn}
    \DeclareMathOperator{\Mod}{Mod}
    \DeclareMathOperator{\Ext}{Ext} \DeclareMathOperator{\II}{II}
    \DeclareMathOperator{\vN}{vN}
\def\R{{\mathbb R}}
\def\C{{\mathbb C}}
\def\N{{\mathbb N}}
\def\Z{{\mathbb Z}}
\def\F{{\mathbb F}}
\def\T{{\mathbb T}}
\begin{document}

\title[Classification of Gamma factors]{A note on the classification of Gamma factors}

\author{Rom\'an Sasyk}

\address{Departamento de Matem\'atica, Facultad de Ciencias Exactas y Naturales, Universidad de Buenos Aires, Argentina}
\address{
and}
\address{
Instituto Argentino de Matem\'aticas-CONICET\\
Saavedra 15, Piso 3 (1083), Buenos Aires, Argentina}

\email{rsasyk@dm.uba.ar}
\thanks{ The author acknowledges support from the following grants: PICT 2012-1292 (ANPCyT), and  UBACyT 2011-2014 (UBA)}

\subjclass[2000]{46L36; 03E15; 37A15}


\keywords{von Neumann algebras; descriptive set theory; Gamma
factors}

\maketitle

\begin{abstract}
One of the earliest invariants introduced in the study of finite von Neumann algebras is the property 
$\Gamma$ of Murray and von Neumann. 
In this note we prove that it is not possible to classify separable 
$\II_1$ factors satisfying the property $\Gamma$ up to isomorphism by a Borel measurable assignment of countable structures as invariants. We also show that the same holds true for the full $\II_1$ factors.
\end{abstract}

\section{Introduction}

In this note we continue with the line of research initiated by the author in collaboration with A. T\"ornquist  in \cite{sato09b},
\cite{sato09a} and \cite{sato09c}, where we applied the notion of
{\it Borel reducibility} from descriptive set theory to study the
complexity of the classification problem of several different
classes of separable von Neumann algebras. 

 Recall that if $E$ and $F$ are equivalence relations on standard
Borel spaces $X$ and $Y$, respectively, we say that $E$ is
\emph{Borel reducible} to $F$ if there is a Borel function $f:X\to
Y$ such that
$$
(\forall x,x'\in X) x E x'\iff f(x) F f(x'),
$$
and if this is the case we write $E\leq_B F$. Thus if $E\leq_B F$
then the points of $X$ can be classified up to $E$ equivalence by a
Borel assignment of invariants that we may think of as
$F$-equivalence classes. $E$ is {\it smooth} if it is Borel
reducible to the equality relation on $\R$. While smoothness is
desirable, it is most often too much to ask for. A more generous
class of invariants which seems natural to consider are countable
groups, graphs, fields, or other countable structures, considered up
to isomorphism. Thus, following \cite{hjorth00}, we will say that an
equivalence relation $E$ is {\it classifiable by countable
structures} if there is a countable language $\mathcal L$ such that
$E\leq_B\simeq^{\Mod(\mathcal L)}$, where $\simeq^{\Mod(\mathcal
L)}$ denotes isomorphism in $\Mod(\mathcal L)$, the Polish space of
countable models of $\mathcal L$ with universe $\N$.

In \cite{sato09a} it was proved that the isomorphism relation in the set
of finite von Neumann algebras is not classifiable by countable
structures. Nonetheless, it can certainly be the case that some
subclasses of finite factors are possible to classify by countable structures. For instance, Connes'
celebrated Theorem \cite{connesinj76}, says that the set of infinite
dimensional injective finite factors has only one element on its
isomorphism class, namely the hyperfinite $\II_1$ factor $R$. In
contrast with the injective case, in this note we show that it is
not possible to obtain a reasonable classification up to
isomorphisms for a well studied family of finite factors that
includes $R$. In order to state our results we observe first that
the set of finite factors can be split in two disjoint subsets:
those who satisfy the property $\Gamma$ of Murray and von Neumann
and those who are {\it full}. The first set contains the hyperfinite
$\II_1$ factor $R$ and more generally, the class of McDuff factors,
i.e. those factors of the form $M\otimes R$ for $M$ a $\II_1$
factor. On the other hand the set of full factors contains the free
group factors $L(\F_n)$. In this article we show that the $\II_1$ factors constructed in
\cite{sato09a} are full. As a consequence, Theorem  7 in \cite{sato09a} strengthens to prove:

\begin{theorem}\label{fullthm}
The isomorphism relation for full type $\II_1$ factors is not
classifiable by countable structures.
\end{theorem}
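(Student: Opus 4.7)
The strategy is essentially prescribed by the sentence preceding the theorem: the Borel family of $\II_1$ factors constructed in \cite{sato09a} already witnesses that isomorphism of $\II_1$ factors is not classifiable by countable structures, so it suffices to verify that every factor in that family is full. Once this is done, the very same Borel map produces a reduction from a non-classifiable equivalence relation into isomorphism of full type $\II_1$ factors, and the theorem follows immediately.

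Concretely, the plan is first to recall the construction in \cite{sato09a}: one has a Borel parametrization $p\mapsto M_p$ where each $M_p$ is a group-measure space factor $L^\infty(X_p,\mu_p)\rtimes \Gamma$ arising from a free, probability-measure-preserving action of a fixed countable group $\Gamma$ (in the arguments of \cite{sato09a} one may take $\Gamma$ non-amenable, e.g.\ a free product, and the actions are built from Bernoulli-type or similarly mixing ingredients), and such that a suitable orbit-equivalence-type relation on the parameter space Borel reduces to isomorphism of the $M_p$. Nothing in the argument of Theorem 7 of \cite{sato09a} needs to be reworked; one only needs to overlay a fullness check on top.

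The technical heart is therefore to show that each $M_p$ is full, i.e.\ that $M_p$ does not have property $\Gamma$. The natural route is to exhibit the two standard ingredients: (i) the acting group $\Gamma$ is non-amenable, and (ii) the action $\Gamma\actson (X_p,\mu_p)$ is strongly ergodic, meaning that every asymptotically $\Gamma$-invariant sequence in $L^\infty(X_p,\mu_p)$ is asymptotically constant. Under (i) and (ii), a classical result (going back to work of Connes, Jones and Choda on central sequences in crossed products) guarantees that $L^\infty(X_p,\mu_p)\rtimes \Gamma$ is full. Verifying (i) will be direct from the choice of $\Gamma$; verifying (ii) uniformly in the parameter $p$ is where the real work lies, because the actions in \cite{sato09a} are cooked up to separate isomorphism classes and one must check that this cooking does not destroy strong ergodicity. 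In the Bernoulli-like case, strong ergodicity is a consequence of non-amenability of $\Gamma$ together with mixing of the base action, so the task reduces to confirming the mixing property survives the construction.

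The main obstacle is precisely this last step: the $\II_1$ factors of \cite{sato09a} are tailored for the purpose of preventing a Borel classification by countable structures, and the fullness verification must go through for the entire Borel family uniformly, not just for a single example. Once strong ergodicity and non-amenability are established for every member of the family, the rest is formal: the Borel map $p\mapsto M_p$ lands inside the standard Borel set of full $\II_1$ factors (fullness being a Borel property in any reasonable parametrization of separable $\II_1$ factors), so the Borel reduction used to prove Theorem 7 in \cite{sato09a} is automatically a Borel reduction into isomorphism of full $\II_1$ factors, proving Theorem~\ref{fullthm}.
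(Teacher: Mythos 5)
Your overall strategy is exactly the paper's: take the Borel family from \cite{sato09a} witnessing non-classifiability of $\II_1$ factors, check that every member is full, and conclude that the same Borel map reduces into isomorphism of full factors. That part is fine and is all the paper does at the top level. But the ``technical heart'' of your proposal --- fullness of each factor in the family --- is left as a plan rather than a proof, and the plan rests on a misreading of the construction. The factors in \cite{sato09a} are not built from Bernoulli-type or mixing ingredients over a varying base: they are the crossed products $M_S=L^\infty(\T^2)\rtimes_{\sigma_S}\F_3$, where the restriction of $\sigma_S$ to the first two generators is one \emph{fixed} action, namely Schmidt's strongly ergodic $\F_2$-action on $\T^2$ coming from the embedding $\F_2\le SL(2,\Z)$, and only the third generator $S$ varies. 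Because of this, there is no uniformity problem at all and no mixing to verify: any asymptotically invariant sequence for the $\F_3$-action $\sigma_S$ is in particular asymptotically invariant for the fixed $\F_2$-subaction, hence trivial by Schmidt's theorem, so every $\sigma_S$ is strongly ergodic by a one-line restriction argument. Your proposed route (``strong ergodicity follows from non-amenability plus mixing of the base, so check that mixing survives'') does not apply to these actions and, even where it would apply, is not carried out; as written, Lemma~\ref{lemmaFull} is asserted rather than proved.

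A second, smaller inaccuracy: the hypothesis you invoke for the Choda-type theorem is non-amenability of the acting group, but the classical result (Theorem~\ref{TheoChoda}) requires that the group be \emph{not inner amenable} (equivalently, in the form actually used, that $L(G)$ be full). Non-amenability alone is not the right condition --- there are non-amenable inner amenable groups --- so you should say that $\F_3$ is not inner amenable (which follows from the Murray--von Neumann paradoxical decomposition argument, as recalled in the paper). With these two corrections the argument closes up and coincides with the paper's proof.
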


It remained then to analyze the complexity of the classification of $\II_1$ factors with the property $\Gamma$. In this note we address this problem by showing that:

\begin{theorem}\label{mainthm}
The isomorphism relation for McDuff  factors is not classifiable by
countable structures.
\end{theorem}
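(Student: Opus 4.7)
My plan is to deduce Theorem \ref{mainthm} from Theorem \ref{fullthm} by constructing a Borel reduction from the isomorphism relation on the full $\II_1$ factors of Theorem \ref{fullthm} to the isomorphism relation on McDuff factors. Let $\{M_\alpha : \alpha \in 2^{\N}\}$ denote the Borel family of full $\II_1$ factors furnished by \cite{sato09a} (and shown to be full in the course of proving Theorem \ref{fullthm}) witnessing that isomorphism of full $\II_1$ factors is not classifiable by countable structures. The candidate reduction is the map
\[
f(\alpha) = M_\alpha \otimes R.
\]
Each $f(\alpha)$ is McDuff by construction, and once one fixes concrete realizations of $R$ and of the $M_\alpha$ on a separable Hilbert space (depending Borel-measurably on $\alpha$), the assignment $\alpha \mapsto M_\alpha \otimes R$ is Borel into the standard Borel space of $\II_1$ factors.

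The key verification is that $f$ is indeed a reduction, i.e.\ that $M_\alpha \cong M_\beta$ if and only if $M_\alpha \otimes R \cong M_\beta \otimes R$. The forward direction is immediate. The reverse direction is an $R$-tensor cancellation statement for full factors, and this is where the substantive work lies. The idea is to exploit fullness in order to recover $M_\alpha$ from its McDuff envelope $M_\alpha \otimes R$: given an isomorphism $\Phi: M_\alpha \otimes R \to M_\beta \otimes R$, the subfactor $\Phi(M_\alpha \otimes 1)$ is a full subfactor of $M_\beta \otimes R$ with relative commutant isomorphic to $R$. One then argues, via either a Popa-style intertwining argument or a Connes-style asymptotic-centralizer argument based on the triviality of central sequences in a full factor, that any such full subfactor must be unitarily conjugate in $M_\beta \otimes R$ to the standard copy $M_\beta \otimes 1$, whence $M_\alpha \cong M_\beta$.

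Once the reduction is in place, Theorem \ref{mainthm} follows at once: a Borel classification of McDuff factors by countable structures would, through $f$, yield a Borel classification of $\{M_\alpha\}_{\alpha}$ by countable structures, contradicting Theorem \ref{fullthm}. I expect the main obstacle to be precisely the tensor-cancellation step, which requires nontrivial structural input about how full factors sit inside their McDuff tensor envelopes; the remaining ingredients (Borelness of $f$, the McDuff property of $f(\alpha)$, and the easy direction of the reduction) are essentially formal and should require only a careful bookkeeping with a fixed Borel parametrization.
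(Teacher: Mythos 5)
Your overall architecture (reduce via $M\mapsto M\otimes R$, check Borelness, check the reduction property) is the same as the paper's, but your key step --- the ``$R$-tensor cancellation for full factors'' --- is false as you state it, and the proof sketch you give for it cannot work. If $M$ is any full $\II_1$ factor and $t>0$, then the amplification $M^t$ is again full and $M^t\otimes R\cong (M\otimes R)^t\cong M\otimes R$ (using $R^t\cong R$); so whenever $M$ has nontrivial fundamental group one obtains full factors $M_1=M$ and $M_2=M^t$ with $M_1\otimes R\cong M_2\otimes R$ but $M_1\not\cong M_2$. For the same reason your claimed unitary conjugacy fails: an isomorphism $\Psi:M_\beta^t\otimes R\to M_\beta\otimes R$ produces a full subfactor $\Psi(M_\beta^t\otimes 1)$ with relative commutant isomorphic to $R$ that need not even be abstractly isomorphic to $M_\beta$. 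The correct general statement is exactly Popa's Theorem~5.1 of \cite{popa06} (Theorem \ref{PopaMcDuff} in the paper): fullness of $M_\alpha$ and $M_\beta$ only yields $M_\alpha\cong M_\beta^t$ for \emph{some} $t>0$.

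The paper therefore needs, and supplies, a second rigidity input that your proposal omits: one must show $t=1$ for the particular factors $M_S=L^\infty(\T^2)\rtimes_{\sigma_S}\F_3$. This is done by invoking Popa's theorem on factors with trivial fundamental group to upgrade $M_{S'}\cong (M_S)^t$ to an isomorphism of the Cartan inclusions $\bigl(L^\infty(\T^2)\subset M_{S'}\bigr)\cong\bigl(pL^\infty(\T^2)\subset pM_Sp\bigr)$, then Feldman--Moore to convert this into a stable orbit equivalence with compression constant $t$, and finally Gaboriau's $\ell^2$-Betti number invariance (the $\ell^2$-Betti numbers of $\F_3$ are nontrivial) to force $t=1$. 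Without some argument of this kind pinning down the fundamental group of your factors $M_\alpha$, the map $\alpha\mapsto M_\alpha\otimes R$ is not known to be injective on isomorphism classes, so your proof is incomplete precisely at its central step.
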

An immediate consequence is:
\begin{corollary}
The isomorphism relation for type $\II_1$ factors satisfying the
property $\Gamma$ of Murray and von Neumann is not classifiable by
countable structures.
\end{corollary}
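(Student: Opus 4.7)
The corollary is essentially a matter of observing that Theorem \ref{mainthm} already does the heavy lifting. My plan is to deduce it by exhibiting the McDuff factors as a Borel subclass of the $\Gamma$ factors, so that the identity map is a Borel reduction of the McDuff isomorphism relation into the $\Gamma$ isomorphism relation, and then invoke the fact that the downward closure under $\leq_B$ of the equivalence relations classifiable by countable structures is closed.

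In more detail, I would first recall (as the excerpt does) that every McDuff factor $M \otimes R$ automatically has property $\Gamma$, because $R$ has $\Gamma$ and tensoring with a factor with $\Gamma$ on the right produces a factor with $\Gamma$. Consequently, in whatever standard Borel parametrization of separable $\II_1$ factors is used in \cite{sato09a} (e.g., the space of von Neumann algebras acting on a fixed separable Hilbert space, or the space of tracial $\ast$-representations of the free $\ast$-algebra on $\N$), the set $\mathcal{MD}$ of codes for McDuff factors sits inside the set $\mathcal{G}$ of codes for $\II_1$ factors with property $\Gamma$. The isomorphism relation on $\mathcal{MD}$ is, by definition, the restriction of the isomorphism relation on $\mathcal{G}$.

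Now I would apply the general principle from descriptive set theory: if $E$ is an equivalence relation on a standard Borel space $X$ and $A \subseteq X$ is any subset, then $(E \restriction A) \leq_B E$ via the inclusion map. In particular, if $E$ is classifiable by countable structures, so is $E \restriction A$, because the class of equivalence relations classifiable by countable structures is downward closed under $\leq_B$. Applying the contrapositive with $E$ the isomorphism relation on $\mathcal{G}$ and $A = \mathcal{MD}$, Theorem \ref{mainthm} tells us that $E \restriction \mathcal{MD}$ is not classifiable by countable structures, so $E$ itself cannot be either.

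There is no real obstacle; the only thing to check with any care is that the chosen Borel parametrization of $\II_1$ factors restricts meaningfully to both McDuff factors and $\Gamma$ factors in the same ambient space, which is immediate from how $\mathcal{MD}$ and $\mathcal{G}$ are defined. Hence the corollary reduces to a one-line descriptive set-theoretic consequence of Theorem \ref{mainthm} together with the containment $\mathcal{MD} \subseteq \mathcal{G}$.
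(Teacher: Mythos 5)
Your proposal is correct and matches the paper's (implicit) argument exactly: the paper simply notes that McDuff factors are $\Gamma$-factors and declares the corollary an immediate consequence of Theorem \ref{mainthm}, which is precisely the restriction-of-an-equivalence-relation observation you spell out. No further comment needed.
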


We end this introduction by mentioning that the study of the connections between logic and operator algebras has recently attracted many researchers from both fields. As a consequence, in the past five years there has been a burst of activity in proving results along the lines of the ones presented in this note and first unveiled in \cite{sato09b}, \cite{sato09a} and \cite{sato09c}.  We refer the reader who wants to learn more on these exciting new developments to the recent survey of I. Farah  \cite{farah}.\\

\section{Gamma factors}

 We start by
recalling the definitions of the objects we study in this article.
Let $\mathcal H$ be an infinite dimensional separable complex
Hilbert space and denote by $\mathcal B(\mathcal H)$ the space of
bounded operators on $\mathcal H$, which we give the weak topology.
A separable von Neumann algebra is a weakly closed self-adjoint
subalgebra of $\mathcal B(\mathcal H)$. The set of von Neumann
algebras acting on $\mathcal H$ is denoted $\vN(\mathcal H)$. A von
Neumann algebra $M$ is said to be {\it finite} if it admits a finite
faithful normal tracial state, i.e. a linear functional $\tau:M\to
\C$ such that: $\tau(x^*x)\geq 0$, $\tau(x^*x)=0$ iff $x=0$,
$\tau(1)=1$\,, $\tau(xy)=\tau(yx)$ and the unit ball of $M$ is
complete with respect to the norm given by the trace
$\|x\|_2=\tau(x^*x)$. If a finite von Neumann algebra is also a {\it
factor}, i.e. its center is trivial, then it has a unique such a
trace. A finite von Neumann factor that is not a matrix algebra is
called a type $\II_1$ factor. This terminology is due to the general
classification of von Neumann algebras according to types (see
\cite[Chapter 5.1]{connesNCG} for an historical account of the
theory of types).

In this note we will be interested in $\II_1$ factors arising from
the so called {\it group-measure space construction}, that we
proceed to describe. For that,
 let $G$ be a
countably infinite discrete group which acts in a measure preserving
way on a Borel probability space $(X,\mu)$. For each $g\in G$ and
$\zeta\in L^2(X,\mu)$ the formula
$$
\sigma_g(\zeta)(x)=\zeta(g^{-1}\cdot x)
$$
defines a unitary operator on $L^2(X,\mu)$.

We identify the Hilbert space $\mathcal H=L^2(G,L^2(X,\mu))$ with
the Hilbert space of formal sums $\sum_{g\in G}\zeta_g\xi_g$, where
the coefficients $\zeta_g$ are in $L^2(X,\mu)$ and satisfy
$\sum_g\|\zeta_g\|_{L^2(X,\mu)}^2<\infty$, and $\xi_g$ are
indeterminates indexed by the elements of $G$. The inner product on
$\mathcal H$ is given by
$$
\langle \sum_{g\in G}\zeta_g(x)\xi_g, \sum_{g\in
G}\zeta'_g(x)\xi_g\rangle=\sum_{g\in G}
\langle\zeta_{g},\zeta'_{g}\rangle_{L^2(X,\mu)}.
$$
Both $L^\infty(X,\mu)$ and $G$ act by left multiplication on
$\mathcal H$ by the formulas
\begin{align*}
&f(\zeta_g(x)\xi_g)=((f(x)\zeta_g(x))\xi_g,\\
&u_h(\zeta_g(x)\xi_g)=\sigma_h(\zeta_g)(x)\xi_{hg},
\end{align*}
where $f\in L^\infty(X,\mu)$, $\zeta_g(x)\in L^2(X,\mu)$ and $g,h\in
G$. Thus if we denote by $\mathcal{FS}$ the set of finite sums,
$$
\mathcal{FS}=\{\sum_{g\in G}f_gu_g: \,f_g\in L^\infty(X,\mu),\,\,
f_g=0, \text{ except for finitely many } g\},
$$
then each element in $\mathcal{FS}$ defines a bounded operator on
$\mathcal H$. Moreover, multiplication and involution in
$\mathcal{FS}$ satisfy the formulas
$$
(f_gu_g)(f_hu_h)=f_g\sigma_g(f_h)u_{gh}
$$
and
$$
(fu_g)^*=\sigma_{g^{-1}}(f^*)u_{g^{-1}}
$$
and so $\mathcal{FS}$ is a $*$-algebra. By definition, the {\it
group-measure space von Neumann algebra} is the weak operator
closure of $\mathcal{FS}$ on $\mathcal B(\mathcal H)$ and it is
denoted by $L^\infty(X,\mu)\rtimes_\sigma G$. The trace on
$\mathcal{FS}$, defined by
$$
\tau( \sum_{g\in G}f_gu_g)=\int_X f_e\,d\mu,
$$
extends to a faithful normal tracial state in
$L^\infty(X)\rtimes_{\sigma} G$ by the formula $\tau(T)=\langle
T(\xi_e),\xi_e\rangle$, where $e$ represents the identity of $G$.

\begin{definition}[Murray-von Neumann \cite{MvN4}]
A finite von Neumann algebra $M$ has the property $\Gamma$ if given
$ x_1,\dots,x_n\in M,$ and $\varepsilon>0$\,there exists $u\in
\mathcal U(M),\tau(u)=0$ such that
$$\|x_iu-ux_i\|_2<\varepsilon, \text{ for all }\, 1\leq i\leq n.$$
\end{definition}

\vskip .1in

It follows immediately from its definition that the hyperfinite
$\II_1$ factor $R$ is a $\Gamma$-factor. Moreover, it is clear that
any finite factor of the form $M\otimes N$ with $N$ a
$\Gamma$-factor is also a $\Gamma$-factor. In particular, {\it
McDuff factors}, i.e. factors of the form $M\otimes R$, are
$\Gamma$-factors. The paradoxical decomposition of the free groups
$\F_n$, $n\geq 2$ is the key ingredient  \cite[Lemmas 6.2.1,
6.2.2]{MvN4} to show that the corresponding group von Neumann
factors $L(\F_n)$, $n\geq 2$ do not have the property $\Gamma$. More
generally, Effros showed in \cite{effros75} that if $G$ is a
discrete ICC\footnote{ICC stands for infinite conjugacy classes. $G$
is ICC if and only if $L(G)$ is a factor.} group and $L(G)$ has the
property $\Gamma$, then $G$ is inner amenable, (so in particular, free groups are not inner amenable). That the converse of Effros' theorem is
false is a recent result of Vaes \cite{vaes09}.\\

 If $M$ is finite von
Neumann algebra with trace $\tau$, $\Aut(M,\tau)$, the set of
$\tau$-preserving automorphisms of $M$ is a Polish group. A basis
for that topology is given by the sets $\mathcal
V_{T,a_1,\dots,a_n,\varepsilon}=\{S\in \Aut(M,\tau):
||S(a_i)-T(a_i)||_2\leq \varepsilon, \forall 1\leq i\leq
n,\,\,a_i\in M \}$. $\Inn(M)$ denotes the set of inner automorphisms
of $M$, i.e. those of the form $Ad(u)$, $u\in\mathcal U(M)$.

\begin{definition}[Connes \cite{connes74}]
A finite von Neumann algebra $M$ is full if $\Inn(M)$ is closed in
$\Aut(M)$.
\end{definition}

In \cite[Corollary 3.8]{connes74}, Connes showed that a $\II_1$
factor $M$ is full if and only if $M$ does not have the property
$\Gamma$.  It follows that for each $n\geq 2$, $L(\F_n)$ is a full
factor.
In order to discern when group measure space von Neumann algebras
are full we need the following:

\begin{definition}[Schmidt \cite{Schmidt80}]
Let $G$ be a discrete group and let $\sigma$ be an ergodic measure
preserving action of $G$ on a probability space $(X,\mu)$. A
sequence $\left ( B_n\right )_{n\in\N}$ of measurable subsets of $X$
is {\it asymptotically invariant}
 if
$$\mu(B_n\triangle\sigma_g(B_n))\to 0, \text{ for all }\, g\in G.$$
The sequence is {\it trivial} if
$$ \mu(B_n)(1-\mu(B_n))\to 0.$$
The action $\sigma$ is  {\it strongly ergodic} if every
asymptotically invariant sequence is trivial.
\end{definition}

The relation between strong ergodicity and fullness has been studied
by several authors. For the purpose of this note, it is enough to
mention the following Theorem of Choda \cite{choda82}:

\begin{theorem}\label{TheoChoda} Let $G$ be a
discrete group that is not inner amenable, and let $\sigma$ be a strongly ergodic measure presearving action
of $G$ on a probability space $(X,\mu)$. Then
$L^\infty(X,\mu)\rtimes_\sigma G$ is a full factor.
\end{theorem}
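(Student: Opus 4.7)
The plan is to argue by contrapositive through Connes' criterion cited above: assume $M := L^\infty(X,\mu) \rtimes_\sigma G$ has property $\Gamma$ and derive that $G$ is inner amenable, contradicting the hypothesis. (Factoriality of $M$ follows from ergodicity of $\sigma$ provided the action is essentially free, which is implicit in the statement.) By property $\Gamma$ one selects a central sequence of trace-zero unitaries $(u_n) \subset \mathcal U(M)$, meaning $\tau(u_n)=0$ and $\|xu_n-u_nx\|_2 \to 0$ for every $x \in M$. Setting $A := L^\infty(X,\mu)$, expand each $u_n$ in its $L^2$-Fourier series $u_n = \sum_{g\in G} a_{n,g} u_g$ with $a_{n,g}:=E_A(u_n u_g^*) \in A$, so that Parseval gives $\sum_{g} \|a_{n,g}\|_2^2 = 1$. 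Testing asymptotic centrality against each $u_h$ and matching Fourier coefficients yields the key estimate
$$
\sum_{g\in G} \|\sigma_h(a_{n,h^{-1}gh}) - a_{n,g}\|_2^2 \longrightarrow 0 \qquad (h \in G).
$$

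Evaluating at $g=e$ gives $\|\sigma_h(a_{n,e})-a_{n,e}\|_2 \to 0$ for every $h$, so $(a_{n,e})$ is an asymptotically $\sigma$-invariant sequence in the unit ball of $A$. Strong ergodicity is formulated for sets, so to apply it one bootstraps via spectral projections: for all but countably many thresholds $t$ the level sets $\{\re(a_{n,e}) > t\}$ form an asymptotically $\sigma$-invariant sequence, with symmetric differences controlled by $\|\sigma_h(a_{n,e}) - a_{n,e}\|_2$. Triviality of each such set-sequence, together with the spectral theorem, upgrades to $\|a_{n,e} - \tau(a_{n,e}) \cdot 1\|_2 \to 0$; since $\tau(u_n)=0$ forces $\tau(a_{n,e})=0$, we conclude that $\phi_n(e) \to 0$, where $\phi_n(g) := \|a_{n,g}\|_2^2$.

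Finally, view $\phi_n \in \ell^1(G)$ as a probability measure on $G$. The Cauchy--Schwarz estimate
$$
\sum_g |\phi_n(g) - \phi_n(h^{-1}gh)| \leq 2 \Bigl(\sum_g \|a_{n,g} - \sigma_h(a_{n,h^{-1}gh})\|_2^2\Bigr)^{1/2},
$$
combined with the display above, gives $\|\phi_n - \phi_n \circ c_h\|_1 \to 0$ for every $h \in G$, where $c_h(g) := h^{-1}gh$. Since $\phi_n(e) \to 0$, renormalizing $\phi_n$ restricted to $G \setminus \{e\}$ yields probability measures $\psi_n$ on $G \setminus \{e\}$ that are asymptotically conjugation-invariant in $\ell^1$; any weak-$\ast$ accumulation point in the space of means is then a conjugation-invariant mean on $\ell^\infty(G \setminus \{e\})$, witnessing inner amenability of $G$ and contradicting the hypothesis. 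The main obstacle is the bootstrap from the set-theoretic statement of strong ergodicity to its $L^\infty$-analogue used in the second paragraph; the Fourier computations and Cauchy--Schwarz step are otherwise routine.
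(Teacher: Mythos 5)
The paper does not actually prove this statement: it is imported verbatim from Choda's 1982 paper, and the only internal hint about the argument is the remark immediately following it, namely that what the proof really uses is fullness of $L(G)$. So your proposal can only be judged on its own merits, and on those merits it is correct; it is essentially the classical argument, inlining Effros's proof that property $\Gamma$ forces inner amenability once strong ergodicity has killed the Fourier coefficient at the identity. The computational steps all check: Parseval for $u_n=\sum_g a_{n,g}u_g$ with $a_{n,g}=E_A(u_nu_g^*)$, the identity $u_hu_nu_h^*=\sum_g\sigma_h(a_{n,h^{-1}gh})u_g$ giving your displayed estimate, $\tau(a_{n,e})=\tau(u_n)=0$, and the Cauchy--Schwarz bound with constant $2$ (using that $\sigma_h$ is trace-preserving so $\phi_n(h^{-1}gh)=\|\sigma_h(a_{n,h^{-1}gh})\|_2^2$). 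You also correctly flag the two points needing care: freeness of the action must be assumed for factoriality (it is implicit in Choda's hypotheses), and the upgrade from the set formulation of strong ergodicity to the statement that every uniformly bounded asymptotically invariant sequence of functions is $\|\cdot\|_2$-close to its mean. The latter is a known equivalence (Schmidt, Jones--Schmidt), proved exactly by the level-set argument you sketch together with a subsequence extraction over the countable group, so it is a citable lemma rather than a gap; note that $\|a_{n,e}\|_\infty\leq 1$ since $E_A$ is contractive, so the sequence is indeed uniformly bounded. The one structural difference from Choda's own route, as signalled by the paper's remark: her hypothesis can be weakened to ``$L(G)$ is full,'' whereas your argument consumes the full strength of non-inner-amenability; since Vaes showed that fullness of $L(G)$ does not imply non-inner-amenability of $G$, your proof establishes a formally weaker theorem than the optimal one the remark alludes to --- though it is exactly the statement that was asked for, and it suffices for every use made of it in this paper.
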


\begin{remark} The condition that is really used in the proof
of Theorem \ref{TheoChoda} is that $L(G)$ is full.
\end{remark}

It is known that a group is amenable if and only if it does not
admit strongly ergodic actions \cite{Schmidt80}, while a group has
the property (T) of Kazdhan if and only if every m.p. ergodic action
of it is strongly ergodic \cite{ConnesWeiss80}.
We describe now a concrete example of a strongly ergodic action of
$\F_2$ that we will use in this work. Since $\F_2$ can be identified
with the finite index subgroup of $SL(2,\Z)$ generated by the
matrices $\left\{
\begin{bmatrix} 1 & 0\\ 2& 1\end{bmatrix},
\begin{bmatrix} 1 & 2\\ 0& 1\end{bmatrix}\right\}$
 (see \cite[II.B.25]{delaHarpe}), it follows that $\F_2$ naturally
acts on $\T^2$. Lets denote such action by $\sigma$ and by $T_a$,
$T_b$ the automorphisms corresponding to the generators $a$, $b$ of
$\F_2$. This action is clearly measure preserving, and one of the
main results in \cite{Schmidt80} is that $\sigma$ is strongly
ergodic. Inspired by earlier work of Gaboriau and Popa
 and T\"ornquist (\cite{GaboriauPopa}, \cite{tornquist06}), in
\cite{sato09a} we used this action as the starting point for
showing that $\II_1$ factors are not classifiable by countable
structures. More precisely, the set
$$\Ext(\sigma)=\{S\in\Aut(\T^2,\mu): T_a,T_b,S \text{ generates a free action of
}\F_3\}$$ was shown in \cite[\S 3]{tornquist06} to be a dense
$G_\delta$ subset of $\Aut(\T^2,\mu)$. Thus $\Ext(\sigma)$ is a
standard Borel space. For each $S\in\Ext(\sigma)$ denote by
$\sigma_S$ the corresponding $\F_3$ action and $M_S\in
\vN(L^2(\F_3,L^2(\T^2,\mu))$ the corresponding group-measure space
von Neumann algebra
$$M_S=L^\infty(\T^2)\rtimes_{\sigma_{S}}\F_3.$$\\

In \cite{sato09a} the author and  T\"ornquist  showed:
\begin{theorem}\label{satomain}The equivalence relation on $\Ext(\sigma)$ given by
$S\simeq^{\mathscr F_{\II_1}} S'$ if $M_S$ is isomorphic to $M_{S'}$
is not classifiable by countable structures.
\end{theorem}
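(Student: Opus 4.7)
The plan is to exhibit a Borel reduction from a known non-classifiable equivalence relation to $\simeq^{\mathscr F_{\II_1}}$. The natural candidate source is orbit equivalence (OE) of the associated actions $\sigma_S$: building on the uncountable families of mutually non-OE actions of $\F_n$ produced by Gaboriau-Popa \cite{GaboriauPopa}, T\"ornquist showed in \cite{tornquist06} that the OE-relation on the family $\{\sigma_S : S \in \Ext(\sigma)\}$ is already not classifiable by countable structures. The goal is therefore to prove that, on $\Ext(\sigma)$, orbit equivalence of $\sigma_S, \sigma_{S'}$ and isomorphism of $M_S, M_{S'}$ coincide, so that the identity map on $\Ext(\sigma)$ serves as the required Borel reduction.

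One direction, from orbit equivalence to $M_S \cong M_{S'}$, is Singer's classical theorem and poses no difficulty. The converse---that a $*$-isomorphism $\pi : M_S \to M_{S'}$ forces orbit equivalence of the $\F_3$-actions---is the real content. In general this fails, but the specific structure here is rigid enough to rescue it. The subalgebra $A := L^\infty(\T^2) \subset M_S$ is a Cartan subalgebra, and the $\F_2$-subaction on $\T^2$ (fixed and identical across all $M_S$) is strongly ergodic; moreover, $\F_2$ is not inner amenable, so by Effros' theorem the subfactor $L(\F_2) \subset M_S$ is itself full. Combining these ingredients via Gaboriau-Popa style rigidity, one shows that after composing $\pi$ with a suitable inner automorphism of $M_{S'}$, it can be arranged to send $A$ onto the corresponding copy of $A$ inside $M_{S'}$; Singer's theorem then converts this into an orbit equivalence of $\sigma_S$ and $\sigma_{S'}$.

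Assembling the reduction requires two further checks, both essentially routine. First, the assignment $S \mapsto M_S$ is Borel: $M_S$ is the weak operator closure of the countable $*$-algebra generated by the $\F_3$-unitaries $u_a, u_b, u_S$ (which depend Borel-ly on $S$) together with a fixed countable weakly dense subset of $L^\infty(\T^2)$. Second, T\"ornquist's non-classifiability witness lives inside $\Ext(\sigma)$ by design. I expect the rigidity step in the previous paragraph to be the principal obstacle: it is the only place where deep $\II_1$-factor machinery is used, and one must verify not merely the abstract rigidity statement of \cite{GaboriauPopa} but also that it applies uniformly across all $S \in \Ext(\sigma)$. Without this step, factor isomorphism might be strictly coarser than OE, and one could only conclude non-classifiability of a formally weaker equivalence relation.
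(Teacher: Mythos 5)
First, a point of reference: the present paper does not prove Theorem \ref{satomain} at all --- it is imported verbatim from \cite{sato09a} --- so your attempt has to be measured against the argument of that earlier paper. Your overall architecture (reduce orbit equivalence of the actions $\sigma_S$ to $\simeq^{\mathscr F_{\II_1}}$ by showing that on $\Ext(\sigma)$ factor isomorphism and orbit equivalence coincide, with Singer/Feldman--Moore handling the easy direction and Borelness of $S\mapsto M_S$ being routine) is the right skeleton, and it is close in spirit both to \cite{sato09a} and to the reduction $M_S\mapsto M_S\otimes R$ carried out in the present note.

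The genuine gap is in your rigidity step, where you name the wrong ingredients. Strong ergodicity of the $\F_2$-subaction, fullness of $L(\F_2)$, and non-inner-amenability do \emph{not} imply that an abstract isomorphism $\pi:M_S\to M_{S'}$ can be corrected by an inner automorphism so as to carry $L^\infty(\T^2)$ onto $L^\infty(\T^2)$. Uniqueness of Cartan subalgebras up to unitary conjugacy is far more delicate --- Connes and Jones constructed $\II_1$ factors with non-conjugate Cartan subalgebras, and none of the properties you list appears in any known uniqueness criterion; ``Gaboriau--Popa style rigidity'' is doing all the work in your sketch while remaining unnamed. The ingredient that actually makes the argument run is Popa's HT-factor theorem \cite{popa}, \cite{popaPNAS}: the inclusion $L^\infty(\T^2)\subset L^\infty(\T^2)\rtimes_{\sigma_S}\F_3$ is rigid (relative property (T) of the pair $(\Z^2\rtimes\F_2,\Z^2)$, inherited by the larger ambient algebra), while $\F_3$ has the Haagerup property, so $L^\infty(\T^2)$ is an HT Cartan subalgebra and is unique up to unitary conjugacy. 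Applied to $\pi(L^\infty(\T^2))$ and $L^\infty(\T^2)$ inside $M_{S'}$, this yields an isomorphism of the orbit equivalence relations and hence, by freeness, orbit equivalence of $\sigma_S$ and $\sigma_{S'}$. (This is exactly the mechanism the present paper borrows via \cite[Proposition]{popaPNAS}, where the extra amplification constant $t$ arising in the stable setting is pinned to $1$ by Gaboriau's $\ell^2$-Betti numbers.) A secondary point to verify is your attribution: whether \cite{tornquist06} literally establishes non-classifiability by countable structures of OE restricted to $\{\sigma_S: S\in\Ext(\sigma)\}$, or whether \cite{sato09a} instead runs Hjorth's turbulence argument directly against $\simeq^{\mathscr F_{\II_1}}$ together with a Gaboriau--Popa separability argument; but that is a citation issue, not a mathematical one, provided the rigidity step above is repaired.
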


In \cite{sato09a} it was shown that $S\to M_S$ is a Borel map from $\Ext(\sigma)$ to
$\vN(L^2(\F_3,L^2(\T^2,\mu)))$. Thus
Theorem \ref{fullthm} is an immediate consequence of the previous theorem
and of the next:

\begin{lemma}\label{lemmaFull} For each $S\in Ext(\sigma)$, $M_S$ is a full factor.
\end{lemma}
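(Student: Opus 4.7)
The plan is to apply Choda's theorem (Theorem \ref{TheoChoda}), together with the remark that follows it, to the action $\sigma_S$ of $\F_3$ on $(\T^2,\mu)$. This reduces the proof to checking two things: that $L(\F_3)$ is full, and that $\sigma_S$ is strongly ergodic.

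The first condition is already known: by Murray--von Neumann \cite[Lemmas 6.2.1, 6.2.2]{MvN4}, $L(\F_n)$ does not have property $\Gamma$ for $n\geq 2$, and by Connes' \cite[Corollary 3.8]{connes74} characterisation this is equivalent to $L(\F_3)$ being a full factor. So only the strong ergodicity of $\sigma_S$ requires work.

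For strong ergodicity of $\sigma_S$, the key observation is that strong ergodicity is automatically inherited from any subgroup action. Indeed, the restriction of $\sigma_S$ to the subgroup $\F_2=\langle a,b\rangle\subset\F_3$ is precisely the action $\sigma$ of $\F_2$ on $\T^2$ studied by Schmidt, which is strongly ergodic by \cite{Schmidt80}. Now, if $(B_n)_{n\in\N}$ is an asymptotically invariant sequence for $\sigma_S$, then in particular
$$\mu(B_n\triangle (\sigma_S)_g(B_n))\to 0 \text{ for all } g\in\F_2,$$
so $(B_n)$ is asymptotically invariant for $\sigma$ and hence trivial. Thus $\sigma_S$ is strongly ergodic.

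Combining these two facts with Theorem \ref{TheoChoda} (using the remark that only fullness of $L(\F_3)$, not the stronger non--inner--amenability hypothesis, is needed in the argument) yields that $M_S=L^\infty(\T^2)\rtimes_{\sigma_S}\F_3$ is a full factor. The main obstacle---if any---is simply to be explicit that strong ergodicity passes from a subgroup action to an overgroup action; everything else is a citation.
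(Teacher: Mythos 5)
Your proof is correct and follows essentially the same route as the paper: restrict an asymptotically invariant sequence for $\sigma_S$ to the subgroup $\F_2=\langle a,b\rangle$, use Schmidt's strong ergodicity of $\sigma$ to conclude triviality, and then apply Theorem \ref{TheoChoda}. The only cosmetic difference is that you verify the hypothesis via the remark (fullness of $L(\F_3)$), whereas the paper invokes directly that $\F_3$ is not inner amenable; both are immediate from the cited results.
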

\begin{proof} Let $(B_n)_{n\in\N}$ be an asymptotically invariant sequence for the $\F_3$-action
$\sigma_S$. Then  $(B_n)_{n\in\N}$ is an asymptotically invariant
sequence for the action restricted to the subgroup generated by
$\left\{T_a,T_b\right\}$. By construction, this is the $\F_2$-action
$\sigma$ described above, thus it is strongly ergodic by \cite[\S
4]{Schmidt80}.
  It follows that $(B_n)_{n\in\N}$ is trivial and then
$\sigma_S$ is strongly ergodic. Since $\F_3$ is not inner amenable,
the result now follows from Theorem \ref{TheoChoda}.
\end{proof}

In order to prove Theorem \ref{mainthm} we require the following
Theorem of Popa (\cite[Theorem 5.1]{popa06}):

\begin{theorem}\label{PopaMcDuff} If  $M_1$ and $M_2$ are full type $\II_1$ factors such that
$M_1\otimes R$, is isomorphic to $M_2\otimes R$ then there exists
 $t\in \R_{>0}$ such that
$M_1$ is isomorphic to $M_2^t$.
\end{theorem}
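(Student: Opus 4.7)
The plan is to invoke Popa's deformation/rigidity technology to show that any two full subfactors of a common McDuff factor with hyperfinite relative commutant must be unitarily conjugate after amplification.

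I would fix an isomorphism $\Phi : M_1 \otimes R \to M_2 \otimes R$, set $N := M_2 \otimes R$, and identify $M_1$ with $\Phi(M_1 \otimes 1) \subset N$. Thus inside the single $\II_1$ factor $N$ I have two unital subfactors $M_1,M_2 \subset N$, each full, and each satisfying $M_i' \cap N \cong R$. The goal is to show that, after amplification, $M_1$ and $M_2$ are unitarily conjugate inside $N$; then their mutual relative commutants (both $\cong R$) are conjugate, and a trace comparison produces the scalar $t$.

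The core step is to apply Popa's intertwining-by-bimodules criterion \cite{popa06} to the pair $(M_1, M_2) \subset N$. Fullness of $M_2$ supplies a spectral gap estimate: there exist a finite set $F \subset M_2$ and a constant $\kappa > 0$ with
$$
\| x - E_{M_2' \cap N}(x) \|_2 \;\leq\; \kappa \sum_{y \in F} \| [x,y] \|_2 \qquad (x \in N).
$$
Combined with the fact that $M_1' \cap N \cong R$ is hyperfinite, this spectral gap is incompatible with $M_1 \not\prec_N M_2$: if no intertwining partial isometry existed, Popa's dichotomy would yield a sequence of unitaries $u_n \in M_1$ with $\| E_{M_2}(a u_n b) \|_2 \to 0$ for all $a,b \in N$; multiplying by approximately central sequences drawn from the hyperfinite commutant $M_1' \cap N$ would then contradict the displayed spectral-gap inequality. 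Hence the criterion applies and delivers a nonzero partial isometry $v$ (possibly after amplifying by $B(\ell^2)$) with $v M_1 v^* \subset p M_2 p$ for some projection $p \in M_2$.

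I would finally upgrade this partial intertwining to a genuine isomorphism: since both $M_i$ are factors and $v M_1 v^*$ has hyperfinite relative commutant in $p N p$ matching a corner of $M_2' \cap N$, dimension counts force $v M_1 v^* = p M_2 p$, giving $M_1 \cong M_2^{\tau(p)}$, so $t = \tau(p)$ works. The main obstacle in this plan is converting fullness of $M_2$ (an asymptotic statement about central sequences in $M_2^\omega$) into the usable \emph{local} spectral-gap inequality inside $N$, and then coupling it with the hyperfiniteness of $M_1' \cap N$ to rule out the no-intertwining alternative in Popa's criterion. This is the crux of Popa's technique and the place where the full hypothesis is used in an essential, non-removable way.
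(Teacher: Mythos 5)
This statement is not proved in the paper at all: it is quoted verbatim as Popa's Theorem 5.1 of \cite{popa06}, so there is no internal argument to compare yours against. Your instinct that the proof lives in Popa's spectral gap/intertwining framework is correct, but as written your argument has a genuine gap, and in fact it would prove a false statement. Notice that you never actually use fullness of $M_1$: you invoke fullness of $M_2$ (for the spectral gap inequality) and only the hyperfiniteness of $M_1'\cap N\cong R$. But the theorem fails if $M_1$ is not assumed full: take $M_1=M_2\otimes R$, so that $M_1\otimes R\cong M_2\otimes R$ and $M_1'\cap N\cong R$, yet $M_1$ is McDuff and hence cannot be isomorphic to any amplification $M_2^t$ of a full factor. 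So any argument with exactly your hypotheses cannot close.

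The concrete breakdown is in the "incompatibility" step. The spectral gap inequality $\|x-E_{M_2'\cap N}(x)\|_2\leq\kappa\sum_{y\in F}\|[x,y]\|_2$ only gives information about elements $x$ that nearly commute with the finite set $F\subset M_2$. The unitaries $u_n\in\mathcal U(M_1)$ produced by the failure of $M_1\prec_N M_2$ have no approximate commutation with $M_2$ whatsoever, and multiplying them by approximately central elements of $M_1'\cap N$ does not create any: the $M_1$-factor of the product still fails to commute with $F$. So the right-hand side of the inequality is never small and no contradiction arises. The correct use of spectral gap intertwines the \emph{hyperfinite} tensor complements rather than $M_1$ into $M_2$: writing $N=M_1\otimes R_1=M_2\otimes R_2$ with $R_i=M_i'\cap N\cong R$, the unitaries in a ``tail'' subalgebra of $R_1$ commute exactly with $M_1$ and almost commute with $R_1$, hence almost commute with all of $N\supset F$; the spectral gap of $M_2\subset N$ (fullness of $M_2$) then forces them close to $R_2$, giving $R_1\prec_N R_2$, and fullness of $M_1$ gives the symmetric containment. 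One then upgrades to unitary conjugacy of (amplifications of) $R_1$ and $R_2$ and passes to relative commutants to obtain $M_1\cong M_2^t$. Your final ``dimension count'' promoting $vM_1v^*\subset pM_2p$ to equality is likewise unjustified as stated; the upgrade step is where most of the technical work in Popa's proof lies.
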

\begin{remark} By interchanging the roles of $M_1$ and $M_2$ one can
assume that $t\in (0,1]$. In which case $M_2^t$ is by definition the
type $\II_1$ factor $pM_2p$ where $p\in\mathcal P(M_2)$ is any
projection of trace equal to $t$ in $M_2$.
\end{remark}

\begin{theorem} The assignment $M_S\to M_S\otimes R$ is a Borel
reduction of $\simeq^{\mathscr F_{\II_1}}$ to isomorphism of McDuff
factors. 
\end{theorem}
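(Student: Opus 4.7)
The plan is to establish two things: first, that the map $S\mapsto M_S\otimes R$ is Borel; second, that it is a reduction, i.e.\ $M_S\cong M_{S'}$ if and only if $M_S\otimes R\cong M_{S'}\otimes R$.

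For Borel measurability, I build on the result from \cite{sato09a} that $S\mapsto M_S$ is Borel from $\Ext(\sigma)$ into $\vN(L^2(\F_3,L^2(\T^2,\mu)))$. Fixing a faithful normal representation of $R$ on a separable Hilbert space $\mathcal{K}$, it suffices to show that the operation $M\mapsto M\otimes R\subseteq\mathcal{B}(L^2(\F_3,L^2(\T^2,\mu))\otimes\mathcal{K})$ is Borel on the Effros--Borel space of von Neumann algebras. This is routine: if $(x_n)_n$ is a Borel family of weakly dense generators for $M$ and $(y_m)_m$ is any fixed weakly dense sequence in $R$, then $(x_n\otimes y_m)_{n,m}$ is a Borel family of weakly dense generators for $M\otimes R$.

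The forward direction of the reduction is immediate: $M_S\cong M_{S'}$ yields $M_S\otimes R\cong M_{S'}\otimes R$ by tensoring with the identity on $R$.

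The converse is the substantive part. Assume $M_S\otimes R\cong M_{S'}\otimes R$. Lemma \ref{lemmaFull} tells us that both $M_S$ and $M_{S'}$ are full, so Popa's Theorem \ref{PopaMcDuff} applies and yields some $t\in\R_{>0}$ with $M_S\cong M_{S'}^t$. The hard part will be to pass from this ``stable isomorphism'' conclusion $M_S\cong M_{S'}^t$ to an actual isomorphism $M_S\cong M_{S'}$, as demanded by the literal reading of the reduction. I expect this step to be handled either by arguing that for the specific group--measure space factors $M_S$ the isomorphism forces $t=1$ (a fundamental-group computation), or by observing that the argument of \cite{sato09a} in fact establishes non-classifiability of the coarser stable-isomorphism relation on $\Ext(\sigma)$, in which case Popa's conclusion is already exactly what is needed and no additional work is required.
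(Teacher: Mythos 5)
Your outline matches the paper's proof up to the decisive step, and then stops short of it. The Borel measurability argument and the easy forward implication are fine and agree with the paper (which cites \cite{haagwin1} for the measurability of $M\mapsto M\otimes R$). Likewise, invoking Lemma \ref{lemmaFull} to get fullness of $M_S$ and $M_{S'}$ and then Popa's Theorem \ref{PopaMcDuff} to obtain $M_S\cong M_{S'}^{\,t}$ for some $t>0$ is exactly what the paper does. But the entire remaining content of the theorem is the claim $t=1$, and you do not prove it: you only name two ways it ``might'' be handled. That is a genuine gap, not a proof.

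For the record, the paper closes the gap via your first alternative, and the argument is not a soft one: by Popa's rigidity theorem for the inclusion $L^\infty(\T^2)\subset L^\infty(\T^2)\rtimes\F_3$ (\cite{popaPNAS}), the isomorphism $M_{S'}\cong (M_S)^t$ can be taken to carry the Cartan subalgebra $L^\infty(\T^2)$ onto the compression of the Cartan subalgebra; Feldman--Moore then converts this into a stable orbit equivalence between $\sigma_S$ and $\sigma_{S'}$ with compression constant $c=t$; finally, since $\F_3$ has a nontrivial $\ell^2$-Betti number and both actions are free, Gaboriau's theorem on $\ell^2$-Betti numbers of orbit equivalence relations forces $t=1$. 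None of these three ingredients appears in your sketch. Your second alternative (settling for the stable isomorphism relation) would prove a different statement from the one asserted --- the map would then reduce stable isomorphism on $\Ext(\sigma)$ rather than $\simeq^{\mathscr F_{\II_1}}$ itself --- and would require re-examining whether the non-classifiability result of \cite{sato09a} applies to that coarser relation, which you do not do. As written, the proposal identifies where the difficulty lives but does not resolve it.
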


\begin{proof}
It is fairly straightforward to prove that the map $M_S\to M_S\otimes
R$ is a Borel assignment (see for instance \cite[Corollary 3.8]{haagwin1}) . 
We are left to show that if $M_S\otimes
R$ is isomorphic to $M_{S'}\otimes
R$, then $M_{S}$ is isomorphic to $M_{S'}$.

Let us fix $S, S'\in\Ext(\sigma)$. Lemma \ref{lemmaFull} shows that $M_S$ and
$M_{S'}$  are full factors. By Theorem \ref{PopaMcDuff},
$M_{S}\otimes R$ is isomorphic to $M_{S'}\otimes R$ if and only if
there exists $t>0$ such that $M_{S'}$ is isomorphic to $(M_{S})^t$.
The proof is over once we show that $t=1$.\\
For this we make use of the celebrated theorem of Popa on $\II_1$ factors with trivial fundamental group \cite{popa}, \cite{popaPNAS}, (see also Connes's account in the Bourbaki S\'eminaire \cite{MR2111648}).
Indeed by \cite[Proposition]{popaPNAS}, there
exists a projection $p\in\mathcal P(L^\infty(\T^2))$, $\tau(p)=t$, such that the
inclusion of von Neumann algebras $(L^\infty(\T^2)\subset
L^\infty(\T^2)\rtimes_{\sigma_{S'}}\F_3)$ is isomorphic to the inclusion 
of von Neumann algebras $pL^\infty(\T^2)\subset p \left (L^\infty(\T^2)\rtimes_{\sigma_S}\F_3 \right ) p$. 
Feldman-Moore's
Theorem \cite{MR0578730} applies to conclude that the action $\sigma_S$ is stable orbit equivalent to
the action $\sigma_{S'}$, with compression constant $c=t$. Since $\F_3$ has non
trivial Atiyah's $\ell^2$-betti numbers, Gaboriau's Theorem on $\ell^2$-betti numbers for orbit equivalence relations \cite[Theorem 3.12]{GaboriauBetti} then implies that
$t=1$.
\end{proof}

\begin{proof}[Proof of Theorem  \ref{mainthm}]
Since $M_S\to M_S\otimes R$ is a Borel
reduction of $\simeq^{\mathscr F_{\II_1}}$ to isomorphism of McDuff
factors and the equivalence relation $\simeq^{\mathscr F_{\II_1}}$ is not classifiable by countable structures, 
it follows that the equivalence relation of isomorphism of McDuff factors is not classifiable by countable structures.
\end{proof}

\bibliographystyle{amsplain}
\bibliography{Gamma}

\end{document}